\date{}
\newcommand{\re}{\mathbb{R}}
\numberwithin{equation}{section}
\newtheorem{thm}{Theorem}[section]
\newtheorem{rmk}[thm]{Remark}
\newtheorem{prop}[thm]{Proposition}
\newtheorem{cor}[thm]{Corollary}
\title{ A Newtonian approach to general black holes}
\author{Alain Haraux\vspace{1ex}\\ 
{\normalsize Sorbonne Universit\'e, Universit\'e Paris-Diderot SPC, CNRS, INRIA}, \\
{\normalsize Laboratoire Jacques-Louis Lions,  LJLL, F-75005,
Paris, France.}\\ 
{\normalsize e-mail: \texttt{alain.haraux@sorbonne-universite.fr}}}
\begin{document}
\maketitle
\begin{abstract}
We show how it is possible to define and study black holes of arbitrary shapes in the framework of Newtonian mechanics.  \\

\vspace{1cm} 

\noindent{\textbf Key words:} gravitation, velocity of light, photons, escape velocity, black holes. \end{abstract}

 
\section{Introduction} According to Wikipedia, objects whose gravitational fields are too strong for light to escape were first considered in the 18th century by the English astronomical pioneer and clergyman John Michell and  a few years later by Pierre-Simon de Laplace. The idea was briefly proposed by Michell in a letter published in November 1784 \cite{Mic}.  It was a natural idea within the framework of Newton's gravitational theory, assuming that light was produced by corpuscles of positive mass, as assumed by Newton. Our present physics models discredit Michell's notion of a light ray shooting directly from the surface of a supermassive star, being slowed down by the star's gravity, stopping, and then free-falling back to the star's surface. Actually the general relativity framework assumes a constant velocity for light and more generally all electro-magnetic waves travelling in the empty space. \\

The concept reappeared in a slightly different form after the creation of general relativity by Albert Einstein.  In 1916, Karl Schwarzschild \cite{Sch} found the first modern relativistic solution that would characterize a black hole. Then David Finkelstein, in 1958, first published the interpretation of ``black hole" as a region of space from which nothing can escape. It was not until the 1960s that theoretical work showed they were a generic prediction of general relativity, but nowadays almost all texts on black holes mention them only in connection with the general relativity model.  \\

In the recent years, the black hole concept has attracted more and more attention since it is conjectured that a giant black hole is sitting in the center of every galaxy, including our milky way.  There are polemics about the existence of such objects in the exact form proposed by Karl Schwarzschild, and indeed it is not clear whether the fine structure of atomic particles really allow the formation of a point singularity, and moreover the effect of gravity at very small distances is not known. It might even become repulsive as conjectured in the framework of some alternative gravitational models. \\

In the present paper we shall not assume anything for small distances and our main point will be to understand the exact kind of lockdown undergone by both matter and light in the vicinity of a very large mass concentration. Since we wish to understand what happens for arbitrary, not necessarily spherical matter concentrations, we adopt the Newtonian formulation of gravity. As a drawback, we need to drop the assumption of photons travelling exactly at the speed of light. We shall justify this point of view in our conclusion. \\

The paper is organized as follows: Section 2 is devoted to the classical spherically symmetric case. It is rather natural to consider first the spherically symmetric space because the study of black holes started with the idea of "black stars" and stars are usually close to spherical with a spherically symmetric distribution of mass. Section 3 concerns the case of arbitrary distributions which might turn out to be relevant in the future if we consider low density extremely large lack holes for which symmetrization under gravity is not yet achieved. In Section 4, we apply the results of Sections 2 and 3 to the case of black holes. Section 5 is devoted to a few remarks about cosmology.

  \section{The escape velocity in the spherically symmetric case}  \subsection{The context} In this section, we recall the principle on which John Michell based his proposal on existence of ``black stars". Let us consider a spherical body of radius $r>0$ with local density depending only on the distance to its center, taken as the origin of coordinates.  It has been known for a long time, at least since Gauss,  that outside the sphere of radius $r$, the distributed mass produces the same gravitational field as if the total mass was located at the center.  Therefore, in the absence of other forces, the equation of motion of a small body, modelized by a point mass located at the spatial position $u\in\re^3$ is \begin{equation} \label{grav.S} u'' = - \frac {GM u} {||u||^2} \end{equation} where $M$ is the total mass of the body and $G$ the gravitational constant. The equation does not depend on the mass of the small body, which could, in the classical framework, be a photon. Associated with \eqref{grav.S}, we have a conserved energy  \begin{equation} \label{en.S} E(u, u') = \frac{1}{2}||u'||^2- \frac {GM} {||u||} \end{equation} From equation \eqref{grav.S}, it follows that starting from any initial state $ u_0, v_0 $  with $||u_0||> r$, there is a unique local solution $ u \in C^2([0, T_{max}), \re^3)$ of \eqref{grav.S} such that $ u(0) = u_0,  u'(0) = v_0 .$ In addition, the only way for the solution $u$ to have a finite time of existence is collapsing to the origin. For this solution to be a true solution of the mechanical problem, we need in addition to warrant $ ||u(t)||>r, \quad \forall t \in [0, T_{max}) $, since the equation ceases to be valid when we enter the ball of radius $r$. By taking the inner product  of both sides of equation \eqref{grav.S}, it is immediate to check that the energy $E$ is conserved: 
  \begin{equation} \label{en-cons} \forall t \in [0, T_{max}), \quad E(u(t), u'(t)) = E(u(0), u'(0))\end{equation} Due to the structure of \eqref{en.S}, it is now clear that if the trajectory $u(t)$ passes through arbitrarily distant points, we must have  $$ E= E(u(0), u'(0))\ge 0. $$ It turns out that as soon as this condition is violated, the trajectory $u(t)$ remains bounded, and the condition for boundedness is optimal. 
  \subsection{Below the escape velocity.} It turns out that as soon as $E<0$, the trajectory cannot escape. We now consider, for some $R>r$,  the sphere $$S_R = \{ u\in\re^3, \quad ||u||= R\} $$  Our main result is 
  \begin{thm}\label{conf} Let $u_0\in S_R$ and $ \displaystyle ||v_0||^2 < \frac{2GM}{R}. $  Then \begin{equation} \label{inconf} \forall t \in [0, T_{max}), \quad ||u(t)|| \le \frac{R}{1- \frac{R||v_0||^2}{2GM}}\end{equation} \end{thm}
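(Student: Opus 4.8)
The plan is to exploit the energy conservation law \eqref{en-cons} together with the sign of the total energy, which is fixed by the hypothesis. First I would record the value of the conserved energy at the initial instant. Since $u_0\in S_R$ means $||u_0||=R$, formula \eqref{en.S} gives $E(u_0,v_0)=\frac{1}{2}||v_0||^2-\frac{GM}{R}$, and the assumption $||v_0||^2<\frac{2GM}{R}$ says exactly that this quantity is strictly negative. Setting $E_0:=E(u_0,v_0)<0$ is therefore the whole content of the hypothesis in disguise.

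Next, by \eqref{en-cons}, for every $t\in[0,T_{max})$ one has $\frac{1}{2}||u'(t)||^2-\frac{GM}{||u(t)||}=E_0$. Discarding the nonnegative kinetic term $\frac{1}{2}||u'(t)||^2\ge 0$ yields the pointwise lower bound $-\frac{GM}{||u(t)||}\le E_0$, i.e. $\frac{GM}{||u(t)||}\ge -E_0=\frac{GM}{R}-\frac{1}{2}||v_0||^2$. The role of the strict inequality in the hypothesis is precisely that the right-hand side is strictly positive, so both sides may be inverted without reversing the inequality, giving $||u(t)||\le\frac{GM}{\frac{GM}{R}-\frac{1}{2}||v_0||^2}$. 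Factoring $\frac{GM}{R}$ out of the denominator then produces exactly the bound \eqref{inconf}.

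I do not expect any genuine analytic obstacle here: the heavy lifting has already been carried out in establishing conservation of energy \eqref{en-cons}, and the conclusion is a purely algebraic consequence of that law combined with the nonnegativity of the kinetic energy. The only point requiring care is checking that the denominator $1-\frac{R||v_0||^2}{2GM}$ is positive, which is guaranteed exactly by $||v_0||^2<\frac{2GM}{R}$; dropping strictness would let the bound degenerate. I would finally note that the estimate holds uniformly on the entire maximal interval $[0,T_{max})$, so the trajectory stays confined to a bounded ball and in particular cannot reach arbitrarily distant points, which is the qualitative statement announced before the theorem.
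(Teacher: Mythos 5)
Your argument is correct and coincides with the paper's own proof: both use the conservation law \eqref{en-cons}, drop the nonnegative kinetic term to obtain $\frac{GM}{||u(t)||}\ge \frac{GM}{R}-\frac{1}{2}||v_0||^2=:\eta>0$, and invert to get $||u(t)||\le GM/\eta$, which is \eqref{inconf}. Your added remarks on the strict inequality guaranteeing $\eta>0$ and on uniformity over $[0,T_{max})$ are accurate but do not change the route.
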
 \begin{proof} By the energy conservation property, we have $$ \forall t \in [0, T_{max}), \quad \frac{1}{2}||u'(t)||^2- \frac {GM} {||u(t)||} = \quad \frac{1}{2}||v_0||^2- \frac {GM} {R}: =-\eta $$ Hence 
  $$ \forall t \in [0, T_{max}), \quad  \frac {GM} {||u(t)||} = \quad \frac{1}{2}||u'(t)||^2+\eta\ge \eta =  \frac {GM} {R} - \frac{1}{2}||v_0||^2 $$  The resulting inequality  
  $$ \forall t \in [0, T_{max}), \quad ||u(t)|| \le \frac{GM}{\eta} = \frac{GM}{\frac {GM} {R} - \frac{1}{2}||v_0||^2}  $$ is equivalent to \eqref{inconf}
  \end{proof} 
  \subsection{Two results of optimality.} 
  The result of Theorem \ref{conf}  is optimal in two directions. 
  \begin{prop} If $u_0\in S_R$ and $ \displaystyle ||v_0||^2 = \frac{2GM}{R}$, in some cases the solution $u$ grows up, which means that $T_{max} = \infty $ and 
  $$ \lim_{t\to\infty} ||u(t)||= \infty. $$ \end{prop}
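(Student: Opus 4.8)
The plan is to exhibit one explicit favorable configuration, namely purely radial outward motion, for which escape occurs; the qualifier ``in some cases'' signals that we need not treat every $v_0$ on the critical sphere $||v_0||^2 = 2GM/R$ (indeed radially inward data would collapse to the origin), but only to produce a single escaping trajectory.

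First I would fix the unit vector $\omega = u_0/R$ and choose the initial velocity radial and outward, $v_0 = ||v_0||\,\omega$. Since the right-hand side of \eqref{grav.S} is at every point a scalar multiple of $u$ (a central force), the ansatz $u(t) = \rho(t)\,\omega$ with $\rho(0) = R$ and $\rho'(0) = ||v_0|| > 0$ solves \eqref{grav.S} as soon as the scalar function $\rho$ solves the corresponding radial equation; by the uniqueness of solutions to \eqref{grav.S} stated in the excerpt, the actual solution is then exactly of this radial form as long as it exists. This reduces the problem to studying the scalar quantity $\rho(t) = ||u(t)|| > 0$.

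Next I would feed this into energy conservation. Because $u = \rho\,\omega$ gives $||u'|| = |\rho'|$ and $||u|| = \rho$, the identity \eqref{en.S} together with $E(u_0, v_0) = \frac12 ||v_0||^2 - \frac{GM}{R} = 0$ (which is precisely the hypothesis $||v_0||^2 = 2GM/R$) yields $\rho'(t)^2 = 2GM/\rho(t)$ for all $t \in [0, T_{max})$. The key step is the sign discussion: since $\rho'(0) = ||v_0|| > 0$ and the quantity $\sqrt{2GM/\rho}$ never vanishes for finite $\rho$, the continuous function $\rho'$ cannot change sign, so $\rho' = +\sqrt{2GM/\rho} > 0$ throughout and $\rho$ is strictly increasing. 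In particular $\rho(t) \geq R > r$, so the physical constraint $||u(t)|| > r$ holds automatically and the trajectory is a genuine solution of the mechanical problem.

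Finally I would upgrade monotonicity to global existence and unbounded growth. Monotone increase of $\rho$ rules out collapse to the origin; since the excerpt records that collapse is the only mechanism for finite-time breakdown, this forces $T_{max} = \infty$. To see that $\rho(t) \to \infty$, I would integrate the separable relation $\rho^{1/2}\rho' = \sqrt{2GM}$ to obtain $\rho(t) = \bigl(R^{3/2} + \tfrac{3}{2}\sqrt{2GM}\,t\bigr)^{2/3}$, which tends to $\infty$ as $t \to \infty$. This establishes growth-up for the chosen configuration and hence the proposition. The only genuinely delicate point is the sign argument guaranteeing that $\rho$ increases rather than decreases; everything else is a direct consequence of energy conservation and the centrality of the force.
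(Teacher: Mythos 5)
Your proof is correct and follows essentially the same route as the paper: restricting to collinear (radial) initial data, using the zero-energy identity to reduce to the scalar relation $\rho'(t)^2 = 2GM/\rho(t)$, and integrating to the explicit escaping solution $\rho(t) = \bigl(R^{3/2} + \tfrac{3}{2}\sqrt{2GM}\,t\bigr)^{2/3}$. If anything, you are slightly more careful than the paper, which simply writes $y' = \pm(\gamma/|y|)^{1/2}$ and selects the growing branch, whereas you justify via uniqueness why the trajectory remains radial and via a sign-continuity argument why the increasing branch is the one actually followed.
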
 \begin{proof} The condition $ ||v_0||^2 = \frac{2GM}{R} $ means  $E= 0$, in which case we have $$ \forall t \in [0, T_{max}), \quad \frac{1}{2}||u'(t)||^2- \frac {GM} {||u(t)||} = 0 .$$ Let us consider the case where $ u_0, v_0 $ are colinear in which case the solution and its velocity both remain on a straight line passing through $0$. By setting $u(t) = y(t) w$ with $w$ some unit vector, the equation reduces to $$ y' = \pm \left(\frac{\gamma}{|y|} \right)^{1/2} $$ with $\gamma:= 2GM$. Looking for positive solutions we end up with two curves 
  $$ y(t) =  \left(R^{3/2} \pm \frac{3}{2} \sqrt\gamma t \right)^{2/3}  $$ The solution \begin{equation} y_{+} (t) =  \left(R^{3/2} + \frac{3}{2} \sqrt\gamma t \right)^{2/3}  $$ is growing up, while the solution $$ y_{-} (t) =  \left(R^{3/2} -\frac{3}{2} \sqrt\gamma t \right)^{2/3}  \end{equation}  collapses with the spherical body in finite time $$ T_{max} = \frac{2}{3} \frac{R^{3/2}- r^{3/2}}{\sqrt\gamma}$$
  \end{proof} 
  \begin{prop} Under the hypothesis of Theorem \ref{conf} the value $\displaystyle \frac{R}{1- \frac{R||v_0||^2}{2GM}} \,\,\left(= \frac{GM }{\eta}\right) $ is achieved as a maximum of $||u(t)||$ for some particular solutions $u.$ \end{prop}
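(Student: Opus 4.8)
The plan is to extract from the energy identity already used in the proof of Theorem~\ref{conf} the exact characterization of when the upper bound is saturated, and then to exhibit an explicit trajectory realizing it. Indeed, the energy relation
\begin{equation*}
\frac{GM}{||u(t)||} = \frac12 ||u'(t)||^2 + \eta, \qquad \eta = \frac{GM}{R} - \frac12 ||v_0||^2 > 0,
\end{equation*}
shows that $||u(t)|| = GM/\eta$ holds \emph{if and only if} $||u'(t)|| = 0$. Thus the quantity $\frac{R}{1 - R||v_0||^2/(2GM)} = GM/\eta$ is attained precisely at an instant where the velocity vanishes, and constructing an extremal solution reduces to producing a trajectory whose speed actually drops to zero somewhere on $[0, T_{max})$. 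The natural candidate is a purely radial (colinear) motion launched outward, so I would reuse the reduction $u(t) = y(t) w$, with $w = u_0/R$ a unit vector, already introduced in the previous proposition.

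First I would fix $u_0 \in S_R$ and choose $v_0$ colinear with $u_0$ and pointing \emph{outward}, with $0 < ||v_0||^2 < 2GM/R$ (the degenerate case $v_0 = 0$ gives $y^* = R$ and the maximum is trivially attained at $t = 0$). The motion then stays on the ray spanned by $w$, and the scalar unknown $y > 0$ satisfies $y'' = -GM/y^2$ with $y(0) = R$, $y'(0) = ||v_0|| > 0$. Energy conservation reads $(y')^2 = 2\bigl(GM/y - \eta\bigr)$, and since $\eta \le GM/R$ we have the turning radius $y^* := GM/\eta \ge R$. As long as $y < y^*$ the right-hand side is strictly positive, so $y' = +\sqrt{2(GM/y - \eta)} > 0$ and $y$ increases; the body travels monotonically outward from $R$ toward $y^*$.

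Next I would show that the turning point $y^*$ is reached in \emph{finite} time by computing
\begin{equation*}
t^* = \int_{R}^{y^*} \frac{dy}{\sqrt{2\bigl(GM/y - \eta\bigr)}},
\end{equation*}
and checking this integral converges. The only delicate point is the behaviour of the integrand near $y = y^*$: there $GM/y - \eta$ vanishes to first order, so $\sqrt{2(GM/y - \eta)}$ behaves like $c\sqrt{y^* - y}$ with $c > 0$, an integrable square-root singularity. Hence $t^* < \infty$, $y(t^*) = y^* = GM/\eta$ and $y'(t^*) = 0$. Since $y''(t^*) = -GM/(y^*)^2 < 0$, the radius attains a strict local maximum at $t^*$, which by the monotone increase on $[0, t^*)$ is the global maximum of $||u(t)|| = y(t)$; moreover $y$ remains $\ge R > r$ throughout $[0, t^*]$, so this portion is a genuine mechanical solution and $t^* < T_{max}$.

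I expect the main obstacle to be precisely this finite-time arrival at the turning point: one must verify that the reciprocal speed $1/\sqrt{2(GM/y - \eta)}$ is integrable up to $y^*$, i.e.\ that the trajectory genuinely \emph{reaches} the maximal radius rather than approaching it only asymptotically. This is the standard regular-turning-point analysis for the radial Kepler problem and is routine once the linear vanishing of $GM/y - \eta$ at $y^*$ is identified, but it is the one step where something could conceivably fail, so it deserves to be carried out explicitly; everything else follows from the energy identity and the monotonicity of the outgoing branch.
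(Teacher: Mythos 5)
Your proof is correct and takes essentially the same route as the paper: restriction to colinear (radial) solutions, reduction via energy conservation to the first-order equation $z' = \left(\gamma/z - 2\eta\right)^{1/2}$, and identification of the turning radius $GM/\eta$ where the velocity vanishes. The one step you carry out explicitly --- the finite-time arrival at the turning point via integrability of the square-root singularity in $\int_R^{y^*} dy/\sqrt{2(GM/y-\eta)}$ --- is precisely what the paper compresses into the phrase ``grows until it reaches the value,'' so your version supplies a detail the paper leaves implicit rather than a different argument.
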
 \begin{proof} Setting $\gamma:= 2GM$ and looking for scalar solutions $u = zw$ with $||w||= 1$ as previously, we end up with the ODEs 
  $$ z' = \pm \left(\frac{\gamma}{|z|}-2\eta \right)^{1/2} $$ The solution $z>0$ of \begin{equation}  z' =  \left(\frac{\gamma}{z}-2\eta \right)^{1/2};\quad z(0)= R \end{equation}  corresponds to a solution $u = zw$ of \eqref{grav.S} with $$||u'(0)||^2 = \frac{\gamma}{R}-2\eta = \frac{2GM}{R}-2\eta< \frac{2GM}{R}$$ and grows until it reaches the value $$ z =  \frac{\gamma}{2\eta} =\frac{GM }{\eta}.$$
  \end{proof} 
  
  \section{A more general case } In this section, we consider a bounded open domain $\Omega$ of $\re^3$, and a mass $M$ filling the domain with local density $\mu \in L^1(\Omega).$ Therefore $ \displaystyle M = \int_{\Omega}\mu(x) dx.$  Equation \eqref{grav.S} is replaced by \begin{equation} 
  \label{grav} u'' = - \int_\Omega \frac {G\mu(x)(u-x)} {||u-x||^2} dx \end{equation} valid for solutions with positions outside the compact set $\overline{\Omega}$, in a region where there is no other matter and where the influence of other gravitational sources is negligible. Now the energy becomes 
  \begin{equation} \label{en.} E(u, u') = \frac{1}{2}||u'||^2-\int_\Omega \frac {G\mu(x)} {||u-x||} dx  \end{equation} We observe that whenever $u\not\in \overline{\Omega}$, we have 
  \begin{equation} \label{in} \frac{GM} {\max_{x\in\overline{\Omega}}||u-x||}\le \int_\Omega \frac {G\mu(x)} {||u-x||} dx \le \frac{GM} {dist(u,\overline{\Omega})} \end{equation}  The construction of local solutions when $u_0\not\in \overline{\Omega}$ is routine. We now have

 \begin{thm}\label{bound} Let $u_0 \not\in \overline{\Omega}$ and $v_0\in \re^3$ be such that \begin{equation} ||v_0||^2 < 2\int_\Omega \frac {G\mu(x)} {||u_0-x||} dx. \end{equation}   
 Then $u$ is bounded on $[0, T_{max})$ and more precisely 
 \begin{equation} \label{inconf2} \forall t \in [0, T_{max}), \quad dist(u(t),\overline{\Omega})\le \frac{2GM}{\int_\Omega \frac {2G\mu(x)} {||u_0-x||} dx - ||v_0||^2.}\end{equation} \end{thm}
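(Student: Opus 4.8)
The plan is to reproduce the energy argument of Theorem~\ref{conf}, the only genuinely new ingredient being that Newton's shell theorem is no longer available for an arbitrary domain, so the exact identity $\int_\Omega \frac{G\mu(x)}{||u-x||}dx = \frac{GM}{||u||}$ must be replaced by the upper bound in \eqref{in}. Taking the inner product of \eqref{grav} with $u'$ shows, just as in Section~2, that the energy \eqref{en.} is conserved along the motion. Setting
$$\eta := \int_\Omega \frac{G\mu(x)}{||u_0-x||}\,dx - \frac{1}{2}||v_0||^2,$$
the hypothesis is precisely the statement that $\eta>0$, and conservation of energy gives, for every $t\in[0,T_{max})$,
$$\int_\Omega \frac{G\mu(x)}{||u(t)-x||}\,dx = \frac{1}{2}||u'(t)||^2 + \eta \ge \eta.$$

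Next I would insert this lower bound into the right-hand inequality of \eqref{in}. As long as the trajectory remains a genuine solution of the mechanical problem, i.e. $u(t)\not\in\overline{\Omega}$ on $[0,T_{max})$ (in analogy with the constraint $||u(t)||>r$ of Section~2), \eqref{in} is valid and yields
$$\eta \le \int_\Omega \frac{G\mu(x)}{||u(t)-x||}\,dx \le \frac{GM}{dist(u(t),\overline{\Omega})},$$
hence $dist(u(t),\overline{\Omega}) \le GM/\eta$. Multiplying numerator and denominator of $GM/\eta$ by $2$ turns this into exactly the bound \eqref{inconf2}, so this last identification is mere bookkeeping.

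It then remains to upgrade the distance estimate to boundedness of $u$ itself. Since $\Omega$ is bounded, $\overline{\Omega}$ is compact and $\rho := \sup_{x\in\overline{\Omega}}||x|| < \infty$; choosing $x\in\overline{\Omega}$ that realizes the distance and using the triangle inequality gives $||u(t)|| \le dist(u(t),\overline{\Omega}) + \rho \le GM/\eta + \rho$, so $u$ is bounded on $[0,T_{max})$. The only real subtlety, as opposed to the spherically symmetric setting, is that the potential can no longer be identified with $GM/||u||$; this is why the conclusion is phrased through $dist(u(t),\overline{\Omega})$ rather than $||u(t)||$, and why \eqref{in}, rather than an exact formula, is the crucial tool.
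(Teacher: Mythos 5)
Your proposal is correct and follows the paper's own proof essentially verbatim: the same conserved energy \eqref{en.}, the same quantity $\eta$, and the same use of the upper bound in \eqref{in} to convert the potential lower bound into the distance estimate \eqref{inconf2}. The only additions --- the factor-of-two bookkeeping and the triangle-inequality step deducing boundedness of $\|u(t)\|$ from the distance bound --- are details the paper compresses into ``the conclusion follows immediately,'' so you have, if anything, been slightly more complete.
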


 \begin{proof} By the energy conservation property the same argument as in the symmetric case now gives, introducing $ \displaystyle \eta = \int_\Omega \frac {G\mu(x)} {||u_0-x||} dx - \frac{1}{2}||v_0||^2 $ , the inequality 
  $$ \forall t \in [0, T_{max}), \quad  \frac{GM} {dist(u,\overline{\Omega})} \ge \int_\Omega \frac {G\mu(x)} {||u-x||} dx =  \frac{1}{2}||u'(t)||^2+\eta\ge \eta . $$  The conclusion follows immediately.  \end{proof}    
  \begin{cor} \label{evas}  Let $u_0 \not\in \overline{\Omega}$ and $v_0\in \re^3$ be such that $$ {\max_{x\in\overline{\Omega}}||u_0-x||}\times ||v_0||^2 < 2GM. $$  Then 
  $$ \forall t \in [0, T_{max}), \quad dist(u(t),\overline{\Omega})\le \frac{2GM}{\int_\Omega \frac {2G\mu(x)} {||u_0-x||} dx - ||v_0||^2} \le \frac{\max_{x\in\overline{\Omega}}||u_0-x||} {1- \frac{||v_0||^2 \max_{x\in\overline{\Omega}}||u_0-x||} {2GM}}.$$ \end{cor}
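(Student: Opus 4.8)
The plan is to deduce both inequalities of Corollary \ref{evas} from the two-sided bound \eqref{in} evaluated at $u_0$, using Theorem \ref{bound} as the engine for the dynamical part. To keep the algebra transparent, I would abbreviate $D := \max_{x\in\overline{\Omega}}\|u_0-x\|$ and $I := \int_\Omega \frac{G\mu(x)}{\|u_0-x\|}\,dx$, so that \eqref{in} reads simply $\frac{GM}{D}\le I \le \frac{GM}{dist(u_0,\overline{\Omega})}$.

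First I would check that the hypothesis of the corollary, $D\,\|v_0\|^2 < 2GM$, forces the hypothesis of Theorem \ref{bound}. The assumption rearranges to $\|v_0\|^2 < \frac{2GM}{D} = 2\cdot\frac{GM}{D}$, and the left half of \eqref{in} then gives $\|v_0\|^2 < 2I$, which is exactly $\|v_0\|^2 < 2\int_\Omega \frac{G\mu(x)}{\|u_0-x\|}\,dx$. Hence Theorem \ref{bound} applies and yields the first claimed inequality, $dist(u(t),\overline{\Omega})\le \frac{2GM}{2I-\|v_0\|^2}$ for all $t\in[0,T_{max})$.

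The second inequality is then a purely algebraic comparison of two fractions. Rewriting the right-hand side as $\frac{D}{1-\frac{\|v_0\|^2 D}{2GM}} = \frac{2GM\,D}{2GM-\|v_0\|^2 D}$, the target bound $\frac{2GM}{2I-\|v_0\|^2}\le \frac{2GM\,D}{2GM-\|v_0\|^2 D}$ becomes, after cancelling the common factor $2GM$ and cross-multiplying, the statement $2GM-\|v_0\|^2 D \le 2ID - \|v_0\|^2 D$, i.e.\ $GM\le ID$. This is once again just the left half of \eqref{in}, so it holds. The cross-multiplication is legitimate because both denominators are strictly positive: $2I-\|v_0\|^2>0$ since $\|v_0\|^2<2I$, and $2GM-\|v_0\|^2 D>0$ is precisely the corollary's hypothesis.

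There is no genuine analytic difficulty here; the proof is entirely a matter of bookkeeping. The only point demanding care — and the closest thing to an obstacle — is tracking the positivity of the two denominators so that both the monotonicity used in comparing fractions and the cross-multiplication remain valid. Once $D$ and $I$ are isolated and the two halves of \eqref{in} are recognized as driving the two respective inequalities, the result follows immediately.
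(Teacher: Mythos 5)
Your proof is correct and follows exactly the route the paper intends (the paper states Corollary \ref{evas} without a written proof, as an immediate consequence of Theorem \ref{bound}): the left half of \eqref{in} at $u_0$ both verifies the hypothesis $\|v_0\|^2<2\int_\Omega \frac{G\mu(x)}{\|u_0-x\|}\,dx$ of Theorem \ref{bound} and, via $GM\le ID$, yields the second fraction comparison. Your bookkeeping of the positivity of both denominators is accurate, so nothing is missing.
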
 
  
 \begin{rmk} Corollary \ref{evas} gives an estimate from below of the escape velocity $V$ at some point  $u_0 \not\in \overline{\Omega}$,  given in the general case by the inequality  $$  V^2 \ge\frac{2GM}{\max_{x\in\overline{\Omega}} ||u_0-x||} . $$ By introducing the average density {$\displaystyle d: = \frac{M}{|\Omega|} $} it can be written in the form 
$$  V^2 \ge |\Omega|\frac{2Gd}{\max_{x\in\overline{\Omega}} ||u_0-x||} . $$\end{rmk}

\section{Application to black holes }  A black hole is a massive body for which the escape velocity overpasses the velocity of light $c$. 

\subsection{General results}   \begin{cor} Assume  that there is no gravitational mass except those in $\Omega.$ Let $B$ be a closed ball containing $\overline{\Omega}$, such that  $$c^2\max_{u\in B} \{ \max_{\overline{\Omega}} ||u-x||\}< 2GM. $$  Then $B$ is a black hole. More precisely, if a photon enters the region B at some time $t_0$, the localization $u(t)$ of the photon satisfies  $$ \forall t \in [t_0, T_{max}), \quad dist(u(t),\overline{\Omega})\le \frac{\max_{\overline{\Omega}} ||u(t_0)-x||} {1- \frac{c^2 \max_{\overline{\Omega}} ||u(t_0)-x||}{2GM}}.$$\end{cor}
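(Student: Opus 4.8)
The plan is to deduce this corollary directly from Corollary~\ref{evas}, since the black-hole claim is essentially a rephrasing of the confinement estimate under the additional physical hypothesis that the moving point is a photon, i.e.\ that it travels at speed $c$. First I would fix a photon entering $B$ at time $t_0$, so that $u(t_0)\in B$ and $\|u'(t_0)\|=c$. The key observation is that the hypothesis $c^2\max_{u\in B}\{\max_{\overline{\Omega}}\|u-x\|\}<2GM$ is a uniform version of the scalar hypothesis of Corollary~\ref{evas}: for \emph{any} starting point $u(t_0)\in B$ one has $\max_{x\in\overline{\Omega}}\|u(t_0)-x\|\le \max_{u\in B}\{\max_{\overline{\Omega}}\|u-x\|\}$, and multiplying by $c^2$ gives exactly $\max_{x\in\overline{\Omega}}\|u(t_0)-x\|\times\|u'(t_0)\|^2 = c^2\max_{x\in\overline{\Omega}}\|u(t_0)-x\|<2GM$.

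Having checked that the velocity bound of Corollary~\ref{evas} holds with $u_0$ replaced by $u(t_0)$ and $v_0$ by $u'(t_0)$, I would invoke that corollary (applied to the solution of \eqref{grav} with initial data $(u(t_0),u'(t_0))$, translating the time origin by $t_0$) to obtain precisely
\[
\forall t\in[t_0,T_{max}),\qquad dist(u(t),\overline{\Omega})\le \frac{\max_{\overline{\Omega}}\|u(t_0)-x\|}{1-\frac{c^2\max_{\overline{\Omega}}\|u(t_0)-x\|}{2GM}},
\]
which is the stated estimate. This shows the photon can never leave the bounded region, and since the bound holds for every entry point in $B$, the ball $B$ indeed traps any photon that enters it, justifying the name ``black hole.''

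The only genuinely delicate point is the interpretation of ``velocity of light'' in this Newtonian framework: the corollary as stated presumes the photon has speed exactly $c$ at the moment $t_0$ of entry, which is what lets me set $\|v_0\|=c$. I would make this explicit, noting that because the gravitational force is conservative the photon does not retain constant speed inside $B$ (this is precisely the Newtonian departure from relativity flagged in the introduction), so $c$ is used only as the entry speed. I expect no analytic obstacle beyond this modeling remark, since all the real work—energy conservation and the resulting distance bound—has already been done in Theorem~\ref{bound} and Corollary~\ref{evas}; the present statement is a clean specialization. A minor bookkeeping step is confirming that $dist(u(t),\overline{\Omega})$ staying finite indeed forces $u(t)$ to remain in a bounded set, so that the photon never escapes to infinity, which is immediate since $\overline{\Omega}$ is compact.
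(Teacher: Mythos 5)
Your proposal is correct and is exactly the argument the paper intends: the paper states this corollary without proof, treating it as an immediate specialization of Corollary~\ref{evas}, and your derivation---bounding $\max_{x\in\overline{\Omega}}\|u(t_0)-x\|$ by the uniform maximum over $B$, setting $\|v_0\|=c$, and translating the time origin to $t_0$---is precisely that specialization, spelled out. Your closing remarks (that $c$ is only the entry speed, and that $u(t_0)\notin\overline{\Omega}$ is needed for equation \eqref{grav} to apply at the entry time) are accurate and, if anything, make the implicit hypotheses more explicit than the paper does.
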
 

\begin{rmk} After Tmax, several things may happen: the photon may disappear in $\overline{\Omega}$ for ever, in which case the inequality remains trivially true but no equation can be written, especially since the photon may be ``digested" by the matter inside $\overline{\Omega}$; it may remain some time in $\overline{\Omega}$ and try to get out again, in which case the corollary can again be applied. Summarizing all the possible cases, we can say that as long as the photon does not disappear, the inequality remains true. So the lockdown is not only local, but global, but this cannot be said directly at the level of the equation which may cease to be satisfied on some time intervals. \end{rmk}

\begin{rmk} The hypothesis that there is no gravitational mass except those in $\Omega$ is of course never satisfied in reality. But the mathematical theorem must be stated under this condition. In practice physicists will consider that if the other masses are far enough, the lock-down property will be satisfied with almost the same inequality. This is a stability assumption which looks reasonable, but is not at all easy either to formulate or to prove mathematically. Without approximations of this type, nothing can be calculated in the real world.  \end{rmk}  

\begin{rmk} \label {inc.}Equation \eqref{grav.S} shows that if a photon is emitted orthogonally from the surface $S_R = \{ u\in\re^3, \quad ||u||= R\} $, its velocity is decreasing at the beginning of a motion, so we see that the Newtonian framework  is inconsistant with the motion of the photon at constant velocity $c$, as already pointed out in the introduction. \end{rmk}  

The previous corollay implies the following simpler looking statement:

\begin{cor} \label {BH-AS} Assume  that there is no gravitational mass except those in $\Omega.$ Let $B$ be a closed ball containing $\overline{\Omega}$, such that  \begin{equation}c^2 diam(B)< 2GM. \end{equation}  Then $B$ is a black hole. More precisely, if a photon enters the region B at some time $t_0$, either the photon disappears in finite time, or its localization $u(t)$ satisfies \begin{equation} \forall t \ge t_0, \quad dist(u(t),\overline{\Omega})\le \frac{diam(B)} {1- \frac{c^2 diam(B)}{2GM}}.\end{equation}\end{cor}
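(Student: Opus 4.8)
The plan is to derive this statement directly from the previous corollary, whose hypothesis and conclusion I only need to recast in terms of $diam(B)$, together with one elementary monotonicity observation and the globalisation discussed in the remark on what happens after $T_{max}$.

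First I would reduce the hypothesis. Since $\overline{\Omega}\subseteq B$ and $B$ is a ball, every pair $u\in B$, $x\in\overline{\Omega}$ satisfies $||u-x||\le diam(B)$, so that
$$ \max_{u\in B}\Big\{\max_{x\in\overline{\Omega}}||u-x||\Big\}\le diam(B). $$
Consequently the hypothesis $c^2 diam(B)<2GM$ forces $c^2\max_{u\in B}\{\max_{x\in\overline{\Omega}}||u-x||\}<2GM$, which is exactly the hypothesis of the previous corollary. That corollary therefore applies and, in particular, $B$ is a black hole.

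Next I would upgrade the bound from $\max_{x\in\overline{\Omega}}||u(t_0)-x||$ to $diam(B)$ by monotonicity. Setting $a:=c^2/(2GM)$, the function $\phi(r)=r/(1-ar)$ has derivative $\phi'(r)=(1-ar)^{-2}>0$ on the interval $0<r<1/a=2GM/c^2$, so it is strictly increasing there. Since the photon enters $B$ at $t_0$, we have $u(t_0)\in B$, whence $\max_{x\in\overline{\Omega}}||u(t_0)-x||\le diam(B)<2GM/c^2$, and both quantities lie in the range of validity. Applying $\phi$ to the previous corollary's estimate then yields, for $t\in[t_0,T_{max})$,
$$ dist(u(t),\overline{\Omega})\le \phi\big(\max_{x\in\overline{\Omega}}||u(t_0)-x||\big)\le \phi(diam(B))=\frac{diam(B)}{1-\frac{c^2 diam(B)}{2GM}}. $$

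Finally I would globalise in time, extending the estimate from $[t_0,T_{max})$ to all $t\ge t_0$; this is where the only genuine subtlety lies, since the equation of motion ceases to hold once the photon enters $\overline{\Omega}$. Following the remark on the situation after $T_{max}$, I would decompose $[t_0,\infty)$ into the relatively open set of times where $u(t)\notin\overline{\Omega}$, on which the ODE holds, and its complement where $u(t)\in\overline{\Omega}$, on which $dist(u(t),\overline{\Omega})=0$ makes the inequality trivial. On each maximal interval of the first type the photon exits $\overline{\Omega}$ from a point still lying in $B$, so the monotone bound with $diam(B)$ applies verbatim with the local exit time playing the role of $t_0$. The dichotomy in the statement then records the only two possibilities: either some exit never occurs because the photon is absorbed inside $\overline{\Omega}$, in which case it disappears in finite time, or the estimate holds on every piece and hence for all $t\ge t_0$. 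The main obstacle is thus not computational but conceptual, namely making rigorous the passage across the times when the equation fails, which rests on the physical interpretation of re-entry and absorption rather than on the differential equation itself.
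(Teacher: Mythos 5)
Your proposal is correct and follows exactly the route the paper intends: the paper gives no separate proof, stating only that ``the previous corollary implies'' this one, and your derivation --- bounding $\max_{u\in B}\max_{x\in\overline{\Omega}}\|u-x\|$ by $diam(B)$, invoking the monotonicity of $r\mapsto r/(1-\frac{c^2 r}{2GM})$ on $(0,\,2GM/c^2)$, and globalising past $T_{max}$ via the paper's remark on re-entry and absorption --- is precisely the intended argument made explicit. You also rightly flag that the only non-computational point (the passage across times when the equation fails, and the implicit assumption that a re-emerging photon again satisfies the velocity hypothesis) is handled by physical interpretation rather than by the ODE, just as in the paper's remark.
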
 

\begin{rmk} In that last corollary, in the hypothesis we only lose a factor 2 with respect to the spherically symmetric situation. The conclusion is of course weaker, depending on the shape.   \end{rmk}

\subsection{Characterization in terms of average density. } 

In the framework of average densities, Corollary \ref {BH-AS} can be rewritten in the form 
\begin{prop} \label {BH-AS-d} Assume  that there is no gravitational mass except those in $\Omega.$ Let $B$ be a closed ball containing $\overline{\Omega}$, such that  
\begin{equation} d> \frac{c^2 diam(B)} {2G|\Omega|}. \end{equation}  Then $B$ is a black hole. \end{prop}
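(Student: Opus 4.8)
The plan is to recognize that this proposition is simply Corollary \ref{BH-AS} rephrased through the definition of the average density, so the proof reduces to a one-line algebraic equivalence of hypotheses followed by a direct citation.

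First I would recall that the average density is defined by $d = M/|\Omega|$, equivalently $M = d\,|\Omega|$, where $|\Omega|$ denotes the Lebesgue measure of $\Omega$. I would then substitute this into the hypothesis of Corollary \ref{BH-AS}, namely $c^2\, diam(B) < 2GM$, writing $2GM = 2G\,d\,|\Omega|$ and dividing both sides by the strictly positive quantity $2G\,|\Omega|$. This yields
\begin{equation}
c^2\, diam(B) < 2GM \iff d > \frac{c^2\, diam(B)}{2G\,|\Omega|},
\end{equation}
so that the hypothesis of the present proposition is exactly equivalent to the hypothesis of Corollary \ref{BH-AS}.

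Having established this equivalence of hypotheses, I would simply invoke Corollary \ref{BH-AS} under the same standing assumption that there is no gravitational mass except those in $\Omega$, and conclude that $B$ is a black hole. This completes the argument.

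Since the entire content is a change of variable from the total mass $M$ to the average density $d$, there is no genuine obstacle. The only points requiring a word of care are that $|\Omega| > 0$, so that the division by $2G\,|\Omega|$ is legitimate and $d$ is well defined, and that $G$, $c$ and $|\Omega|$ are all positive, which guarantees that the displayed equivalence is a true two-sided implication rather than a one-directional estimate.
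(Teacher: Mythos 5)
Your proof is correct and coincides with the paper's own treatment: the paper introduces Proposition \ref{BH-AS-d} precisely as Corollary \ref{BH-AS} ``rewritten'' via the definition $d = M/|\Omega|$, which is exactly your substitution $2GM = 2G\,d\,|\Omega|$ followed by division by the positive quantity $2G|\Omega|$. Nothing further is needed.
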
 The spherically symmetric case is especially important.

\begin{prop} In the spherically symmetric case, when $\Omega = B(x_0, r)$ and $ B= \overline{\Omega}$ , the sufficient condition following from Theorem \ref {conf} for $B$ to be a black hole is   
\begin{equation}  \label {d-sym} d> \frac{c^2 r} {2G|\Omega|} = \frac{3}{8\pi}\frac{c^2} {G r^2}.\end{equation} \end{prop}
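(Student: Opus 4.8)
The plan is to specialize Theorem \ref{conf} to the situation where the confining sphere coincides with the surface of the body, and then to convert the resulting confinement condition into the black-hole inequality through the definition of the escape velocity. Since here $B = \overline{\Omega} = \overline{B(x_0,r)}$, any photon entering $B$ must cross the surface $S_r$, so (taking $x_0$ as the origin) the relevant radius in Theorem \ref{conf} is $R = r$.

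First I would recall that Theorem \ref{conf}, applied with $R = r$, guarantees that a trajectory launched from $S_r$ with speed $\|v_0\|$ remains bounded precisely when $\|v_0\|^2 < \frac{2GM}{r}$. Hence the escape velocity $V$ at the surface is the critical value determined by $V^2 = \frac{2GM}{r}$: below this threshold the motion is confined, and by the two optimality propositions of the previous section the threshold is sharp. Next I would invoke the definition of a black hole, namely that the escape velocity reaches or exceeds the speed of light, $c \le V$. Squaring and using the value of $V$, this is $c^2 \le \frac{2GM}{r}$; for a photon travelling at speed $c$ to be strictly confined one asks $c^2 r < 2GM$.

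The last step is pure bookkeeping. I would substitute $M = d\,|\Omega|$, where $d = M/|\Omega|$ is the average density introduced earlier, so that $c^2 r < 2G d\,|\Omega|$ rearranges to $d > \frac{c^2 r}{2G|\Omega|}$, which is the first equality of \eqref{d-sym}. The second equality follows from the elementary volume computation $|\Omega| = \frac{4}{3}\pi r^3$, giving $\frac{c^2 r}{2G|\Omega|} = \frac{3}{8\pi}\frac{c^2}{Gr^2}$.

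There is essentially no analytic difficulty here: the entire content lies in recognizing that the hypothesis $B = \overline{\Omega}$ forces the choice $R = r$, so that Theorem \ref{conf} applies directly at the surface of the body with no loss of the factor that appears in the general (non-symmetric) Corollary \ref{BH-AS}. The only point demanding slight care is the direction and strictness of the inequalities, matching the strict condition $\|v_0\|^2 < 2GM/R$ of Theorem \ref{conf} to the black-hole threshold $c \le V$; but this is immediate and poses no real obstacle.
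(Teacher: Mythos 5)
Your proposal is correct and follows essentially the same route as the paper, which states this proposition without a separate proof precisely because it is the direct specialization of Theorem \ref{conf} with $R=r$: the confinement threshold $\|v_0\|^2 < 2GM/r$ applied to a photon of speed $c$ gives $c^2 r < 2GM = 2Gd|\Omega|$, and $|\Omega| = \frac{4}{3}\pi r^3$ yields \eqref{d-sym}. Your additional remarks on the sharpness of the threshold (via the optimality propositions) and on the strictness of the inequalities are consistent with the paper's setup and add nothing contradictory.
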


\begin{rmk} If  $\Omega = B(x_0, r)$ and $ B= \overline{\Omega}$, but the mass distribution in $\omega$ is not spherically symmetric, our sufficient condition  becomes  \begin{equation}  \label {d-asym} d> \frac{c^2 r} {G|\Omega|} = \frac{3}{4\pi}\frac{c^2} {G r^2}=:K/r^2. \end{equation}   The two conditions \eqref{d-sym} and \eqref{d-asym} only differ by a factor 2, and we do not know whether the second condition is optimal in general. \end{rmk}

\subsection{Some very large black holes }  An immediate observation is that formula \eqref {d-asym} allows arbitrary low average densities when $r$ tends to infinity. In \cite{St}, the author suggested that the entire observable universe may lie in the interior of a black hole. His calculations were done in the relativistic framework. We can recover this result in the newtonian framework as follows: first we can check easily that $$ K = \frac{3}{4\pi}\frac{c^2} {G}\sim 3.22. 10^{26} kg/m.$$

The radius of the observable universe is presently estimated to be  $$r \sim 4. 10^{26} m$$ It follows that $$ K/r^2 \sim (3,22/16) 10^{-26} kg/m^3$$  approximately  $2.10^{-27} kg/m^3$. \medskip

\noindent Now the average density of the observable universe is presently estimated around $10^{-23} kg/m^3, $
 some 5 000 times more. So the necessary condition for a black hole is fullfilled. Then of course everything will depend on what is outside the observable universe...

\section{Concluding remarks}   In this paper, we presented a classical approach to black holes which is completely independent of the Schwarzschild singular model (cf. \cite{Sch}) and does not make any hypothesis on the distribution of mass inside the black hole. This approach suggests several remarks. 

\begin{rmk} It follows from our last calculation that there might exist, inside our universe, gigantic black holes of about one billion light years diameter having an average density comparable to that of the observable universe. Actually, mass concentrations will usually have a much larger average density, so we can very well imagine that some huge concentrations of baryonic matter are invisible for us. They should manifest themselves by gravitational lens phenomena  and could have been confused with dark matter. Moreover, if the usual optical aberrations which  are found around ``classical'' black holes are probably absent in low density gigantic black holes, they could contribute in a more discreet manner to the ``ghost light" illuminating the cosmos which is not completely explained today {\rm (cf. \cite{Montes-T}).} Finally, if a large proportion of stars were hidden inside such gigantic black holes, this could help to solve Olber's paradox {\rm (cf. \cite{Har, W})} without invoking the hypothesis of a spatially limited universe. \end{rmk}

\begin{rmk} There is presently, in connection with general relativity, some kind of consensus to admit that photons have no mass. The mass should be in any case very small, cf. {\rm \cite{G}}. If this mass is under $10^{-69} kg$, it could be impossible to detect it, cf. \rm{\cite{Va}.}  \end{rmk}

\begin{rmk} As previously mentioned in Remark \ref{inc.}, Newton's framework is inconsistent with a constant velocity of photons. But it might happen that the velocity of light be the velocity of electromagnetic waves, while the velocity of photons could be much lower. It  is what happens with electricity, the velocity of electrons is infinitesimal compared to that of the electric signal propagation.  \end{rmk}

\begin{rmk} Light emanating orthogonally  from the surface of a black hole could be a unique example of a light ray stopping without any visible obstacle and coming backwards due to gravity, unfortunately it is probable that nobody will ever be able to observe such a phenomenon even in the distant future: approaching a high density black hole is probably difficult and dangerous, and low density black holes  may be out of reach for ever.  \end{rmk}

As a conclusion, we hope that the naive classical approach of the present paper will lead to further developments in more modern settings.

\end{document}